\numberwithin{equation}{section}
\def\C{\ensuremath{\mathbb{C}}}
\def\E{\ensuremath{\mathbb{E}}}
\def\N{\ensuremath{\mathbb{N}}}
\def\R{\ensuremath{\mathbb{R}}}
\def\Z{\ensuremath{\mathbb{Z}}}
\newcommand{\mr}[1]{\mathrm{#1}}
\newcommand{\cF}{\ensuremath{\mathcal{F}}}
\newcommand{\cH}{\ensuremath{\mathcal{H}}}
\newcommand{\abs}[1]{\left|{#1}\right|}
\newcommand{\nrm}[1]{\left\|{#1}\right\|}
\newcommand{\set}[1]{\left\{#1\right\}\relax}
\newcommand{\scal}[1]{\left\langle\relax #1 \relax\right\rangle}
\newcommand{\qtxtq}[1]{\quad \text{#1}\quad}
\newtheorem{thm}{Theorem}[section]
\newtheorem{prop}[thm]{Proposition}
\newtheorem{lem}[thm]{Lemma}
\newtheorem{cor}[thm]{Corollary}
\newtheorem{example}[thm]{Example}
\newtheorem{rem}[thm]{Remark}
\title{A Law Limit Theorem for a sequence of random variables}
\author{M. Maslouhi}
\address{M. Maslouhi:
IbnTofail university. Kenitra 14000. Morocco.
}
\email {mostafa.maslouhi@uit.ac.ma}
\subjclass[2010]{11S80,11Z05,33E99}
\keywords{Law of a random variable, Levy's Theorem, Hankel transform, Gaussian distribution, Cauchy distribution, Box-Muller method}
\begin{document}

\begin{abstract} 
An application  of Levy's continuity theorem and Hankel transform allow us to establish a law limit theorem for the sequence  $V_n=f(U)\sin(n U)$, where $U$ is uniformly distributed in $(0,1)$ and $f$ a given function. 
 Further, we investigate the inverse problem by  specifying a limit distribution and look for the suitable function $f$ ensuring the convergence in law to the specified distribution. Our work recovers and extends existing similar works, in particular we  make it possible to sample from  known laws including  Gaussian and Cauchy  distributions.
\end{abstract}

\maketitle

\markboth{M. Maslouhi}{A Law Limit Theorem for a sequence of random variables}

\section{Introduction and preliminaries}\label{sec:intro}

Studying the asymptotic behavior of sequences of random variables has always been crucial and a  fundamental tool for theoretical research and practical applications in fields such as probability, statistics, finance and engineering  to mention few. See \cite{montgomery2019introduction,robert2014machine,hastie2009elements,black1973pricing} for more details. In these areas,  the Law of Large Numbers and the Central Limit Theorems play crucial role. 
In recent years, due to the huge development in communication and information fields, there is a growing need for simulations using specific distributions in a wide range of disciplines. See \cite{lee2004gaussian,box1958note,thomas2007gaussian,toral1993generation,schollmeyer1991noise,marsaglia2000ziggurat} and the references therein.
This paper aims to follow in this direction and  explore a law limit theorem for a specific sequence of random variables given by \eqref{eq:def_V_n}. This exploration uses two key tools, namely Levy's continuity theorem together with Hankel transform and yields two main results. The first  main result gives sufficient conditions on the parameter function $f$  under which the convergence in law of   $V_n[f]$ holds. (See Theorem \ref{thm:main_result}). The second main result  investigates the inverse problem.  
More precisely, starting from a suitable given density, we give sufficient conditions allowing us to construct a function $f$ such that the sequence $V_n[f]$ given by \eqref{eq:def_V_n} converges in law to our specified limiting distribution (See Theorem \ref{thm:solving_PDF}).
 
We point out here that a  version of Theorem \ref{thm:main_result} has been established in  \cite{addaim2018enhanced} in the particular case $f(u)=\sqrt{-2\ln(u)}$ which led to the normal distribution using direct calculations. Our work recover their result by exploring a general framework and giving the convergence problem a wide perspective.  Indeed, by investigating this  general problem, the direct and inverse one, we can  obtain a convergence in law to a wide class of districbutions including the Cauchy distribution as shown in this paper.  

The remaining of this section serves to introduce some notations and definitions  needed for the sequel. Section \ref{sec:main_result} sets our  main results, namely Theorem  \ref{thm:main_CV}, Theorem \ref{thm:main_result} and Theorem \ref{thm:solving_PDF}. In Section \ref{sec:applications} we apply our results  by giving some examples  to illustrate the practical aspect  of our work.  Finally, in Section  \ref{sec:conclusion}, we summarize our findings and suggest some directions for future work.

Throughout this paper, $\N=\set{0,1,2,\dots}$ , $\Z=\set{\dots,-2,-1,0,1,2,\dots}$   and $\R^d$ is equipped with its standard scalar product and Euclidean norm, denoted in the sequel respectively by $\scal{,}$ and $\nrm{\ }$.

The sets $L^1(\R^d)$ and $ L^1_{loc}((a,b))$, $(a,b)\subset \R$, keep their standard meaning in this paper with respect to Lebesgue measure.

For $\psi:\R^d\to\C$, with $\psi\in L^1(\R^d)$, $d\geq1$, its Fourier transform is given by
$$\cF_d(\psi)(y)=\frac{1}{(2\pi)^{d/2}}\int_{\R^d} \psi(x)e^{-i\scal{x,y}}dx.$$

Following \cite{temme2011special}, Bessel function of the first kind and order $n\in \Z$, denoted hereafter by $J_{n}$ is given by its integral expression
\begin{equation}\label{eq:J_n_def_integral}
 J_{n}(x) ={\frac {1}{2\pi }}\int _{-\pi }^{\pi }e^{i(x\sin(u)- nu)}\,du,\quad x\in\R.
\end{equation}
See \cite{abramowitz1948handbook, watson1922treatise} for details on Bessel functions.

In particular, we see from \eqref{eq:J_n_def_integral} that for all $n\in\Z$, $J_{n}$ is bounded in $\R$ and 
\begin{equation}\label{eq:J_n_relationship}
	J_{-n}(x)=(-1)^nJ_{n}(x),\quad  \forall x\in\R,
\end{equation}
and that the function $J_0$,  which plays an important role in this paper, is even.  
  Morover, for all $n\in\N$, $J_n$ admits the series expansion 
\begin{equation}\label{eq:J_n_series_expr}
	J_{n}(x)=\sum_{k=0}^{\infty }\frac {(-1)^{k}}{k!(k+n)!}\left({\frac {x}{2}}\right)^{2k+n},\quad  x\in\R.
\end{equation}
According to Watson \cite[Page 24]{watson1922treatise}  we have

\begin{equation}\label{eq:J_n_bound}
	\abs{J_{n}(x)}\leq\frac {\abs{x}^{n}}{2^n\Gamma(n+1/2)\Gamma(1/2)}
\end{equation}
for all $x\in\R$ and $n\in\N$.

The Hankel transform of order 0 of a function $f:(0,+\infty)\to\R$ is given by:
\begin{equation}\label{eq:def_Hankel_Trans}
	\mathcal{H}_{0}(f)(t)=\int _{0}^{\infty }f(r)J_{0}(rt)\,r\operatorname{d}\!r,\quad t\in\R
\end{equation}
provided that  $x\mapsto xf(x)\in L^1((0,+\infty))$. Following \cite[Page 456]{watson1922treatise} ( See also \cite[Theorem 19]{sneddon1972use}), if $f$ is further of bounded variation in each bounded sub-interval of $(0,+\infty)$ and $x\mapsto \sqrt{x}f(x)\in L^1((0,+\infty))$, then  $f=\cH_0(\cH_0(f))$. That is
\begin{equation}\label{eq:Hankel_inverse}
f(r)=\int _{0}^{\infty }\mathcal{H}_{0}(f)(t)J_{0}(rt)\,t\operatorname{d}\!t, \quad r\geq 0.
\end{equation}

In \cite{poularikas2010transforms}, a relationship between the Fourier transform of radial functions in $\R^2$ and their Hankel transform is given.
\begin{thm}\cite{poularikas2010transforms}\label{thm:radial_FT}.\quad 
	Let  $w\in L^1(\R^2)$ be  radial  with $w(x)=G(\nrm{x})$. Then for all $y\in\R^2$ we have
	$$\cF_2(w)(y)=\mathcal{H}_{0}(G)(\nrm{y}).$$ 
\end{thm}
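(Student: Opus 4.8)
The plan is to compute $\cF_2(w)(y)$ directly in polar coordinates and to recognise the resulting angular integral as an integral representation of $J_0$. First I would fix $y\in\R^2$ and observe that, since $w\in L^1(\R^2)$, the function $x\mapsto w(x)e^{-i\scal{x,y}}$ lies in $L^1(\R^2)$, so Fubini's theorem applies freely below; the same hypothesis, rewritten in polar coordinates, shows that $r\mapsto rG(r)$ belongs to $L^1((0,+\infty))$, so that $\cH_0(G)(\nrm{y})$ is well defined. Writing $x=r(\cos\theta,\sin\theta)$ with $r>0$ and $\theta\in(-\pi,\pi]$, we have $dx=r\,dr\,d\theta$ and $\scal{x,y}=r\nrm{y}\cos(\theta-\theta_0)$, where $\theta_0$ is the polar angle of $y$. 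Since the map $\theta\mapsto e^{-ir\nrm{y}\cos(\theta-\theta_0)}$ is $2\pi$-periodic, translating the variable $\theta$ by $\theta_0$ removes $\theta_0$ from the integral, and we obtain
\begin{equation*}
\cF_2(w)(y)=\frac{1}{2\pi}\int_0^{\infty}G(r)\left(\int_{-\pi}^{\pi}e^{-ir\nrm{y}\cos\theta}\,d\theta\right)r\,dr .
\end{equation*}

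The key step is to identify the inner integral with $2\pi J_0(r\nrm{y})$. Starting from \eqref{eq:J_n_def_integral} with $n=0$ we have $J_0(s)=\frac{1}{2\pi}\int_{-\pi}^{\pi}e^{is\sin u}\,du$; since $J_0$ is even this also equals $\frac{1}{2\pi}\int_{-\pi}^{\pi}e^{-is\sin u}\,du$, and the substitution $u=\theta-\pi/2$, which turns $\sin u$ into $-\cos\theta$, together with the $2\pi$-periodicity of the integrand (used to restore the interval $(-\pi,\pi)$), gives
\begin{equation*}
\int_{-\pi}^{\pi}e^{-is\cos\theta}\,d\theta=2\pi J_0(s),\qquad s\in\R .
\end{equation*}
Substituting $s=r\nrm{y}$ back into the previous display cancels the factor $\frac{1}{2\pi}$ and yields
\begin{equation*}
\cF_2(w)(y)=\int_0^{\infty}G(r)J_0(r\nrm{y})\,r\,dr=\cH_0(G)(\nrm{y}),
\end{equation*}
the last equality being the definition \eqref{eq:def_Hankel_Trans}.

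The only genuinely delicate point is the bookkeeping in the two angular substitutions: absorbing the shift by $\theta_0$ and the shift by $\pi/2$, each of which is legitimate precisely because in both cases we integrate a $2\pi$-periodic function over an interval of length $2\pi$, together with the parity of $J_0$ that allows one to pass between $e^{+is\sin u}$ and $e^{-is\sin u}$. Everything else is a routine application of Fubini's theorem and the polar change of variables, so I do not anticipate any further obstruction; in particular the case $y=0$ is covered automatically, since then both sides reduce to $\int_0^\infty G(r)\,r\,dr$ using $J_0(0)=1$.
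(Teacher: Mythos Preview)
The paper does not prove this theorem; it is quoted from \cite{poularikas2010transforms} without argument, so there is nothing to compare against. Your proof is the standard one and is essentially correct.

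One small bookkeeping slip in the angular identification: after invoking the evenness of $J_0$ to pass to $2\pi J_0(s)=\int_{-\pi}^{\pi}e^{-is\sin u}\,du$, the substitution $u=\theta-\pi/2$ (which, as you say, sends $\sin u$ to $-\cos\theta$) turns the integrand into $e^{+is\cos\theta}$, not $e^{-is\cos\theta}$. The quickest repair is simply to drop the evenness step: starting directly from $2\pi J_0(s)=\int_{-\pi}^{\pi}e^{is\sin u}\,du$ and applying the same substitution already yields $\int_{-\pi}^{\pi}e^{-is\cos\theta}\,d\theta$ in one stroke. Either way the displayed identity is true, and the remainder of your argument goes through unchanged.
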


\medskip

 One precious tool in this paper is Levy continuity's theorem (see \cite{fristedt2013}), which turns to be the  corner stone in the proof of Theorem \ref{thm:main_result}.
 
 \begin{thm}[L\'evy's continuity Theorem]\label{thm:Levy}
 	Consider a sequence of random variables $X_{n}$, $n\ge 1$, and define the sequence of corresponding characteristic functions $$\varphi_n(t):=\E\left(e^{itX_{n}}\right),\quad (t\in\R),$$
 	where $\E$ is the expected value operator. If the sequence $\varphi_n$ converges point wise to some function $\varphi$ which is continuous at $t=0$,  then the sequence $X_{n}$ converges in distribution to some random variable $X$ and $\varphi$ is the characteristic function of the random variable $X$. 
 \end{thm}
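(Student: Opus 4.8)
The plan is to deduce L\'evy's theorem from two classical facts about weak convergence: Prokhorov's criterion that a tight sequence of probability laws is relatively compact in distribution, and the uniqueness theorem for characteristic functions. The hypothesis that $\varphi$ is continuous at $t=0$ will be used in exactly one place, namely to prove that the sequence $(X_n)_{n\ge1}$ is tight; everything else is soft. I expect the tightness step to be the only real obstacle.

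First I would record the elementary tail estimate: for any random variable $X$ with characteristic function $\psi$ and any $u>0$,
$$\P\left(\abs{X}\ge \frac{2}{u}\right)\le \frac{1}{u}\int_{-u}^{u}\left(1-\mathrm{Re}\,\psi(t)\right)\,dt .$$
This is obtained by rewriting the right-hand side, via Fubini, as $\E\left[2-\tfrac{2\sin(uX)}{uX}\right]$, discarding the nonnegative contribution of the event $\abs{uX}<2$, and using $\abs{\sin(y)/y}\le 1/2$ for $\abs{y}\ge 2$. I would then apply this with $\psi=\varphi_n$. Since $\varphi_n(0)=1$ for every $n$ and $\varphi_n(0)\to\varphi(0)$, we get $\varphi(0)=1$; since $\varphi$ is continuous at $0$, given $\varepsilon>0$ I can fix $u_0>0$ with $\frac{1}{u_0}\int_{-u_0}^{u_0}\bigl(1-\mathrm{Re}\,\varphi(t)\bigr)\,dt<\varepsilon/2$. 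Because $\abs{1-\mathrm{Re}\,\varphi_n}\le 2$ on the finite interval $[-u_0,u_0]$, dominated convergence gives $\frac{1}{u_0}\int_{-u_0}^{u_0}\bigl(1-\mathrm{Re}\,\varphi_n(t)\bigr)\,dt\to\frac{1}{u_0}\int_{-u_0}^{u_0}\bigl(1-\mathrm{Re}\,\varphi(t)\bigr)\,dt$, so for $n$ large the left-hand integral is $<\varepsilon$ and hence $\P\bigl(\abs{X_n}\ge 2/u_0\bigr)<\varepsilon$. Enlarging the threshold to handle the finitely many remaining indices yields tightness of $(X_n)_{n\ge1}$.

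Next I would invoke Prokhorov's theorem (equivalently, Helly's selection theorem together with the tightness just established): every subsequence of $(X_n)$ has a further subsequence converging in distribution to some random variable. If $X_{n_k}$ converges in distribution to $Y$ along such a subsequence, then, testing against the bounded continuous function $x\mapsto e^{itx}$, we get $\varphi_{n_k}(t)\to\E(e^{itY})$ for every $t\in\R$; but by hypothesis $\varphi_{n_k}(t)\to\varphi(t)$, so the characteristic function of $Y$ is $\varphi$. By the uniqueness theorem for characteristic functions, all distributional subsequential limits of $(X_n)$ share one common law, say $\mathrm{Law}(X)$, whose characteristic function is $\varphi$.

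Finally, a standard subsequence argument finishes the proof: a tight sequence all of whose distributional subsequential limits coincide with $\mathrm{Law}(X)$ must itself converge in distribution to $X$ — otherwise some bounded continuous $g$ would satisfy $\E\,g(X_n)\not\to\E\,g(X)$ along a subsequence, which by tightness has a further sub-subsequence converging in distribution, necessarily to $X$, a contradiction. Hence $X_n$ converges in distribution to $X$ and $\varphi$ is its characteristic function. As indicated, the delicate point is the tightness step, where continuity of $\varphi$ at $0$ is indispensable: without it mass may escape to infinity (for instance $X_n\sim\mathcal N(0,n)$ has $\varphi_n(t)\to \mathbf 1_{\{t=0\}}$), which is precisely the pathology the hypothesis excludes.
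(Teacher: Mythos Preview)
Your proof is correct and follows the standard textbook route (tail bound from the characteristic function near $0$, tightness, Prokhorov, uniqueness of characteristic functions, subsequence argument). The paper, however, does not prove this theorem at all: it merely states L\'evy's continuity theorem and cites \cite{fristedt2013} as a reference, using it as a black-box tool in the proof of Theorem~\ref{thm:main_result}. So there is no proof in the paper to compare yours against; you have simply supplied a full (and valid) argument where the paper chose to quote the result.
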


The next result, useful for its own, turns out to be crucial for our developments. Up to our knowledge the formula \eqref{eq:fundamental_equat} exists nowhere. 
\begin{prop}\label{pro:on_bessel_expansion}
	Let $w\in\R$. Then we have
	\begin{equation}\label{eq:h_Fourier_series}
		e^{iw\sin(x)}=\sum_{k=-\infty}^{\infty}J_k(w)e^{ik x}
	\end{equation}
	for all $x\in\R$. 	Moreover,  \begin{equation}\label{eq:fundamental_equat}
		\frac{w^2}{2}=\sum_{k=-\infty}^{\infty}k^2(J_k(w))^2
	\end{equation}
	
\end{prop}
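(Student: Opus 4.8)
The plan is to read \eqref{eq:h_Fourier_series} as the Fourier series of the smooth $2\pi$-periodic function $x\mapsto e^{iw\sin(x)}$, and then to obtain \eqref{eq:fundamental_equat} by differentiating that series once and invoking Parseval's identity.

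I would first fix $w\in\R$ and put $g(x)=e^{iw\sin(x)}$. Since $g$ is $2\pi$-periodic and of class $C^\infty$, it equals the sum of its Fourier series $\sum_{k\in\Z}c_k e^{ikx}$, with uniform convergence. Computing the coefficients gives $c_k=\frac{1}{2\pi}\int_{-\pi}^{\pi}e^{iw\sin(x)}e^{-ikx}\,dx=\frac{1}{2\pi}\int_{-\pi}^{\pi}e^{i(w\sin(x)-kx)}\,dx$, which is exactly $J_k(w)$ by the integral representation \eqref{eq:J_n_def_integral}. This proves \eqref{eq:h_Fourier_series}.

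For \eqref{eq:fundamental_equat}, I would differentiate term by term: because $g\in C^\infty$ and the coefficients $J_k(w)$ decay faster than any power of $\abs{k}$ for fixed $w$ — a fact that follows from the bound \eqref{eq:J_n_bound} for $k\ge 0$ together with \eqref{eq:J_n_relationship} for $k<0$ — the differentiated series converges uniformly to $g'$. Hence
$$ iw\cos(x)\,e^{iw\sin(x)}=g'(x)=\sum_{k\in\Z}ik\,J_k(w)\,e^{ikx}, $$
so the continuous $2\pi$-periodic function $h(x):=w\cos(x)\,e^{iw\sin(x)}$ has Fourier coefficients $k\,J_k(w)$, which are real. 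The same rapid decay makes $\sum_{k\in\Z}k^2(J_k(w))^2$ absolutely convergent, so Parseval's identity applies to $h\in L^2((-\pi,\pi))$ and yields
$$ \sum_{k\in\Z}\abs{k\,J_k(w)}^2=\frac{1}{2\pi}\int_{-\pi}^{\pi}\abs{h(x)}^2\,dx=\frac{1}{2\pi}\int_{-\pi}^{\pi}w^2\cos^2(x)\,dx=\frac{w^2}{2}, $$
using $\abs{e^{iw\sin(x)}}=1$. Since $\abs{k\,J_k(w)}^2=k^2(J_k(w))^2$, this is precisely \eqref{eq:fundamental_equat}.

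The only step that needs care is the legitimacy of the term-by-term differentiation and of Parseval's identity; both reduce to the super-exponential decay of $J_k(w)$ in $k$ coming from \eqref{eq:J_n_bound} (with \eqref{eq:J_n_relationship} handling the negative indices), so I expect this to be the main, and rather mild, obstacle. Alternatively, one can avoid discussing $g'$ as a function altogether by using that the $k$-th Fourier coefficient of the derivative of a periodic function equals $ik$ times that of the function, so that $k\,J_k(w)$ are the Fourier coefficients of $h$ directly, and then apply Parseval to $h$.
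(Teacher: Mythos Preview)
Your proposal is correct and follows essentially the same approach as the paper: identify \eqref{eq:h_Fourier_series} as the Fourier series of $e^{iw\sin(x)}$ with coefficients $J_k(w)$ via \eqref{eq:J_n_def_integral}, differentiate term by term using the decay \eqref{eq:J_n_bound} and \eqref{eq:J_n_relationship}, and apply Parseval to $w\cos(x)e^{iw\sin(x)}$ to obtain \eqref{eq:fundamental_equat}. The only cosmetic difference is that the paper records $\sum_k\abs{kJ_k(w)}<\infty$ as the justification for differentiating, which is exactly what your decay remarks amount to.
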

\begin{proof}
	For $w\in\R$ fixed, define  the function
	$$h(x):=e^{iw\sin(x)},\quad x\in \R.$$  Keeping the formula  \eqref{eq:J_n_def_integral} in mind, the Fourier series expansion  of $h$ gives us 
	\begin{equation*}
		h(x)=\sum_{k=-\infty}^{\infty}J_k(w)e^{ik x}
	\end{equation*}
	for all $x\in\R$ and this proves our first claim. 
	
	In other hand,  from \eqref{eq:J_n_relationship}  and \eqref{eq:J_n_bound} we have  $\sum_{k=-\infty}^{\infty}\abs{kJ_k(w)}<+\infty$, which allows us  to derive with respect to $x$ in \eqref{eq:h_Fourier_series} and get 	
	$$w\cos(x)e^{iw\sin(x)}=\sum_{k=-\infty}^{\infty}kJ_k(w)e^{ik x}$$ for all $x\in\R$. Using Parseval identity, we deduce that
	$$\frac{w^2}{2\pi}\int_{-\pi}^{\pi}\abs{\cos(x)}^2dx=\sum_{k=-\infty}^{\infty}k^2(J_k(w))^2$$
which is the second desired result.
	
\end{proof}

Before leaving this section we establish  a well known simple lemma  needed for the sequel.
\begin{lem}\label{lem:Fourier_transf}
	Fix $a>0$ and set  $$\theta_a(t)=\frac{1}{t^2+a^2},\quad t\in\R.$$  Then we have  
	$\cF_1(\theta_a)(t)= \frac{\sqrt{\pi}}{a\sqrt{2}}e^{-a\abs{t}}$ for all $t\in\R$.
\end{lem}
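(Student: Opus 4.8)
The plan is to compute the Fourier transform $\cF_1(\theta_a)(t) = \frac{1}{\sqrt{2\pi}}\int_{\R} \frac{e^{-i x t}}{x^2+a^2}\,dx$ directly by contour integration, which is the standard route for such a rational integrand. First I would note that $\theta_a \in L^1(\R)$ since the integrand decays like $x^{-2}$, so the Fourier transform is well defined. Writing $x^2+a^2 = (x-ia)(x+ia)$, the integrand has simple poles at $x = \pm ia$.

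Next I would split into the two sign cases for $t$. For $t \le 0$, close the contour in the upper half-plane with a large semicircle of radius $R$; the contribution of the arc vanishes as $R\to\infty$ because $|e^{-ixt}| = e^{t\,\mathrm{Im}\,x} \le 1$ there and the denominator grows like $R^2$ (Jordan's lemma, or just the crude $ML$ bound, suffices). The only enclosed pole is $x=ia$, with residue $\frac{e^{-i(ia)t}}{2ia} = \frac{e^{at}}{2ia}$, so by the residue theorem $\int_{\R}\frac{e^{-ixt}}{x^2+a^2}\,dx = 2\pi i\cdot\frac{e^{at}}{2ia} = \frac{\pi}{a}e^{at} = \frac{\pi}{a}e^{-a|t|}$ (using $t\le 0$). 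For $t \ge 0$, close instead in the lower half-plane, pick up the pole at $x=-ia$ with residue $\frac{e^{-i(-ia)t}}{-2ia} = \frac{e^{-at}}{-2ia}$, and the orientation is clockwise, giving $-2\pi i \cdot \frac{e^{-at}}{-2ia} = \frac{\pi}{a}e^{-at} = \frac{\pi}{a}e^{-a|t|}$. In either case $\int_{\R}\frac{e^{-ixt}}{x^2+a^2}\,dx = \frac{\pi}{a}e^{-a|t|}$, and dividing by $\sqrt{2\pi}$ yields $\cF_1(\theta_a)(t) = \frac{\pi}{a\sqrt{2\pi}}e^{-a|t|} = \frac{\sqrt{\pi}}{a\sqrt{2}}e^{-a|t|}$, as claimed.

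An alternative I would mention is to avoid complex analysis entirely: recall the elementary transform pair $\cF_1\big(e^{-a|\cdot|}\big)(x) = \sqrt{\tfrac{2}{\pi}}\,\frac{a}{x^2+a^2}$ (a one-line real computation splitting the integral at $0$), and then invoke Fourier inversion together with the fact that $e^{-a|\cdot|}$ is continuous and integrable to get the stated formula for $\cF_1(\theta_a)$; one must track the normalization constants carefully since the paper's convention carries a $\frac{1}{\sqrt{2\pi}}$.

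There is no real obstacle here — the result is classical and the lemma is flagged as ``well known.'' The only point demanding a little care is bookkeeping: the sign of the exponent in the contour argument (which forces the choice of upper versus lower half-plane depending on $\mathrm{sign}(t)$, and produces $e^{-a|t|}$ rather than $e^{at}$ or $e^{-at}$), and the factor of $(2\pi)^{-1/2}$ in the definition of $\cF_1$ used in this paper, which is what turns the residue value $\frac{\pi}{a}$ into $\frac{\sqrt{\pi}}{a\sqrt{2}}$.
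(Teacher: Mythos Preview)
Your proof is correct. Your primary route via contour integration and the residue theorem is sound and the bookkeeping (choice of half-plane according to the sign of $t$, the normalization $(2\pi)^{-1/2}$) is handled cleanly.

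The paper, however, takes precisely the shortcut you sketch at the end as an ``alternative'': it simply records that $\cF_1\big(e^{-a|\cdot|}\big)(t) = a\sqrt{2/\pi}\,\theta_a(t)$ and then appeals to Fourier inversion. So your alternative \emph{is} the paper's proof. The contour-integration argument buys you a self-contained computation that does not lean on the inversion theorem, at the cost of invoking complex analysis for a statement the paper treats as essentially a one-liner; the paper's approach is shorter and stays entirely real-variable, but tacitly assumes the reader is happy with Fourier inversion for this pair.
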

\begin{proof}
	Noting that  $$\cF_1(e^{-a\abs{u}})(t)=a\sqrt{\frac{2}{\pi}}\theta_a(t)$$ the result follows using the inverse Fourier transform.  	
\end{proof}

Now we have all needed ingredients for our developments.
 \section{Main Results} 
 \label{sec:main_result}
 
 We begin our developments by the first main result in this paper. It is the corner stone on which all the other results are constructed. 
 \begin{thm}\label{thm:main_CV}
 	Consider $f:(a,b)\subset \R\to \R$, with $f\in L^1((a,b))$. 
 	Then  we have $$\lim_{n\to\infty}\int_{a}^{b}e^{if(u)\sin(nu)}du=\int_{a}^{b}J_0(f(u))du.$$ 
 	Further, if $a,b\in\R$ then the result holds with $f\in L^1_{loc}(a,b)$ only.
 \end{thm}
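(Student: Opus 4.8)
The plan is to expand $e^{if(u)\sin(nu)}$ via the Jacobi--Anger expansion \eqref{eq:h_Fourier_series} established in Proposition \ref{pro:on_bessel_expansion}, integrate term by term, and then let $n\to\infty$, showing that every term with $k\neq 0$ vanishes in the limit while the $k=0$ term is exactly $\int_a^b J_0(f(u))\,du$. Concretely, for fixed $u$ we have $e^{if(u)\sin(nu)}=\sum_{k=-\infty}^{\infty}J_k(f(u))e^{iknu}$, and since $\sum_k |J_k(f(u))|<\infty$ (using \eqref{eq:J_n_relationship} and \eqref{eq:J_n_bound}, which give $|J_k(w)|\le |w|^{|k|}/(2^{|k|}\Gamma(|k|+1/2)\Gamma(1/2))$, a rapidly decaying bound), Fubini/Tonelli applies provided $\sum_k \int_a^b |J_k(f(u))|\,du<\infty$. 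The latter follows from $f\in L^1((a,b))$ together with the bound $|J_k(f(u))|\le |f(u)|^{|k|}/(2^{|k|}\Gamma(|k|+1/2)\Gamma(1/2))$ for the tail and boundedness of $J_k$ for the finitely many small $k$; so
$$\int_a^b e^{if(u)\sin(nu)}\,du=\sum_{k=-\infty}^{\infty}\left(\int_a^b J_k(f(u))e^{iknu}\,du\right).$$

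Next I would handle the limit. The $k=0$ term is $\int_a^b J_0(f(u))\,du$, independent of $n$. For each fixed $k\neq 0$, the function $u\mapsto J_k(f(u))$ lies in $L^1((a,b))$ (again by the bound above), so by the Riemann--Lebesgue lemma $\int_a^b J_k(f(u))e^{iknu}\,du\to 0$ as $n\to\infty$. To pass the limit inside the sum over $k$ I would invoke dominated convergence for the counting measure on $\Z$: the summand is dominated uniformly in $n$ by $\int_a^b |J_k(f(u))|\,du$, which is summable in $k$ by the argument above. Hence $\lim_{n\to\infty}\int_a^b e^{if(u)\sin(nu)}\,du=\int_a^b J_0(f(u))\,du$, proving the first assertion.

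For the refinement when $a,b\in\R$ (so the interval is bounded) and $f\in L^1_{loc}((a,b))$ only, the idea is a truncation/exhaustion argument. Fix $\varepsilon>0$ and choose $[a',b']\subset(a,b)$ with $b-b'$ and $a'-a$ small enough that the contributions of $\int_a^{a'}$ and $\int_{b'}^b$ to both $\int e^{if\sin(nu)}$ and $\int J_0(f)$ are each less than $\varepsilon$ in absolute value; this is possible because $|e^{if(u)\sin(nu)}|\le 1$ and $|J_0(f(u))|\le 1$, so these tail integrals are bounded by the Lebesgue measure of the removed pieces, uniformly in $n$. On $[a',b']$ the restriction of $f$ is in $L^1$, so the first part applies and $\int_{a'}^{b'}e^{if(u)\sin(nu)}\,du\to\int_{a'}^{b'}J_0(f(u))\,du$. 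Combining the three pieces and letting $\varepsilon\to0$ gives the claim.

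The main obstacle I anticipate is making the term-by-term integration and the interchange of $\lim_n$ with $\sum_k$ fully rigorous with a single dominating summable bound; the Watson estimate \eqref{eq:J_n_bound} is exactly what makes this work, since $|f(u)|^{|k|}/(2^{|k|}\Gamma(|k|+1/2))$ decays superexponentially in $|k|$ and is integrable in $u$ once one splits off the finitely many indices $k$ with $|f(u)|^{|k|}$ not yet small (or, more cleanly, bounds $|J_k(f(u))|\le \min\{1,\ |f(u)|^{|k|}/(2^{|k|}\Gamma(|k|+1/2)\Gamma(1/2))\}$ and checks summability of the $u$-integrals directly). Everything else is a routine application of Riemann--Lebesgue and dominated convergence.
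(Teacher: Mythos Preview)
Your overall architecture matches the paper's exactly: expand via Jacobi--Anger, justify interchanging $\int_a^b$ and $\sum_k$, kill each $k\neq 0$ term with Riemann--Lebesgue, and pass the limit through $\sum_k$ by dominated convergence. The truncation argument for the $L^1_{loc}$ case is also the same as the paper's.

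The gap is in the summability estimate. You need $\sum_{k\neq 0}\int_a^b |J_k(f(u))|\,du<\infty$ from $f\in L^1((a,b))$ alone, and the Watson bound \eqref{eq:J_n_bound} does not give this: it yields $|J_k(f(u))|\le c_k|f(u)|^{|k|}$, so the $k$th term of your sum is controlled by $c_k\int_a^b|f(u)|^{|k|}\,du$, which requires $f\in L^{|k|}$, not just $L^1$. Your proposed fix of ``splitting off the finitely many indices $k$ with $|f(u)|^{|k|}$ not yet small'' fails because that finite set depends on $u$ through $|f(u)|$, and for unbounded $f$ there is no uniform cutoff. The alternative $\min\{1,c_k|f(u)|^{|k|}\}$ bound can in principle be pushed through, but it needs the nontrivial estimate $\sum_k\min\{1,c_k w^{|k|}\}\le C(1+|w|)$, which you have not supplied.

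The paper closes this gap with the Parseval-type identity \eqref{eq:fundamental_equat}, namely $\sum_{k} k^2 J_k(w)^2 = w^2/2$, combined with Cauchy--Schwarz:
\[
\sum_{k\neq 0}|J_k(w)| \;=\; \sum_{k\neq 0}\frac{1}{|k|}\,|k J_k(w)|
\;\le\; \Bigl(\sum_{k\neq 0}\frac{1}{k^2}\Bigr)^{1/2}\Bigl(\sum_{k\neq 0}k^2 J_k(w)^2\Bigr)^{1/2}
\;=\; C\,|w|.
\]
This gives directly $\int_a^b\sum_{k\neq 0}|J_k(f(u))|\,du\le C\|f\|_{L^1}<\infty$, which is exactly the integrable, $n$-independent dominator you need both for Fubini and for the dominated-convergence-in-$k$ step. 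Once you replace your Watson-based bound by this one, the rest of your argument goes through verbatim.
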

 \begin{proof}Let  $u\in (a,b)$.  	Taking $w=f(u)$ in Proposition \ref{pro:on_bessel_expansion} and letting $x=nu$ in \eqref{eq:h_Fourier_series} we get
 	$$e^{if(u)\sin(n u)}=\sum_{k=-\infty}^{\infty}J_k(f(u))e^{iknu}.$$  
 	Whence, 
 	\begin{align*}
 		\int_{a}^{b} e^{if(u)\sin(n u)}du-\int_{a}^{b} J_0(f(u))du =\int_{a}^{b}\left(\sum_{{}^{k=-\infty}_{k\neq 0}}^{\infty}J_k(f(u))e^{iknu}\right)du
 	\end{align*}
 	
 	We assume first here that $f\in L^1((a,b))$.
 	Using Cauchy-Schwartz  inequality together with \eqref{eq:fundamental_equat}, yields 
 	\begin{align}
 		&\int_{a}^{b}\sum_{{}^{k=-\infty}_{k\neq 0}}^{\infty}\abs{J_k(f(u))}du
 		\leq C_1\int_{a}^{b}\left(\sum_{{}^{k=-\infty}_{k\neq 0}}^{\infty}(kJ_k(f(u)))^2\right)^{1/2}\leq C_2 \int_{a}^{b}\abs{f(u)}du\label{ali:cvu_Rieman_Lebesgue}
 	\end{align}
 	where $C_1=\left(2\sum_{k=1}^{\infty}\frac{1}{k^2}\right)^{1/2}$ and $C_2=\frac{C_1}{\sqrt{2}}$. 
 	
 		Taking $f\in L^1((a,b))$ into account allows us to write  
 	\begin{align*}
 		\int_{a}^{b} e^{if(u)\sin(n u)}du-\int_{a}^{b} J_0(f(u))du =\sum_{{}^{k=-\infty}_{k\neq 0}}^{\infty}\int_{a}^{b} J_k(f(u))e^{iknu}du.
 	\end{align*}
 	Finally, by Riemann-Lebesgue lemma together with  \eqref{ali:cvu_Rieman_Lebesgue}, we get that  	$$\lim_{n\to\infty}\int_{a}^{b}e^{if(u)\sin(nu)}du=\int_{a}^{b}J_0(f(u))du.$$

 	Assume now that $a,b\in \R$ and  $f\in L^1_{loc}(a,b)$. Fix $\epsilon>0$ and choose $\alpha,\beta$ such that $$\abs{a-\alpha}<\epsilon \qtxtq{and}\abs{b-\beta}<\epsilon.$$ Then for all $n\in\N$ we have
 	$$\abs{\int_{a}^{b}e^{if(u)\sin(nu)}du-\int_{a}^{b}J_0(f(u))du}\leq\abs{\int_{\alpha}^{\beta}e^{if(u)\sin(nu)}du-\int_{\alpha}^{\beta}J_0(f(u))du} +4\epsilon.$$ Our result follows now using the previous case and the proof is complete. 
 \end{proof}

  In the sequel, $U$ denotes a random variable uniformly distributed in the interval $[0,1]$. For  $f:(0,1)\to \R$  define the random variable  $V_n[f]$ given by 
  \begin{equation}\label{eq:def_V_n}
  	V_n=V_n[f]:=f(U)\sin(n U),\quad n=0,1,2,\dots
  \end{equation}  

An application of Theorem \ref{thm:main_CV} and Levy's theorem \ref{thm:Levy} all together lead to our second main result.
\begin{thm}\label{thm:main_result}
	Assume that the function $f$ in \eqref{eq:def_V_n} satisfies $f\in L^1_{loc}(0,1)$.
	
Then the sequence of random variables $V_n[f]$ defined by \eqref{eq:def_V_n} converges in law to a random variable $V$ such that $$\E(e^{itV})=\int_{0}^{1}J_0(tf(u))du, \quad \forall t\in \R.$$
\end{thm}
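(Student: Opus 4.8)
The plan is to apply L\'evy's continuity theorem (Theorem \ref{thm:Levy}) to the sequence $V_n[f]$, so everything reduces to identifying the pointwise limit of the characteristic functions $\varphi_n(t) = \E(e^{itV_n[f]})$ and checking that this limit is continuous at $t=0$. First I would write out the characteristic function explicitly using the definition \eqref{eq:def_V_n} and the fact that $U$ is uniform on $[0,1]$:
\begin{equation*}
\varphi_n(t) = \E\!\left(e^{it f(U)\sin(nU)}\right) = \int_0^1 e^{i\,(t f(u))\,\sin(nu)}\,du.
\end{equation*}
For fixed $t\in\R$, set $g(u) := t f(u)$. Since $f\in L^1_{loc}(0,1)$, so is $g$, and hence Theorem \ref{thm:main_CV} (applied on the interval $(a,b)=(0,1)$, which has $a=0, b=1 \in \R$, so the $L^1_{loc}$ hypothesis suffices) gives
\begin{equation*}
\lim_{n\to\infty} \varphi_n(t) = \lim_{n\to\infty}\int_0^1 e^{i g(u)\sin(nu)}\,du = \int_0^1 J_0(g(u))\,du = \int_0^1 J_0(t f(u))\,du =: \varphi(t).
\end{equation*}

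Next I would verify the hypotheses of L\'evy's theorem for the candidate limit $\varphi$. The key point is continuity of $\varphi$ at $t=0$; in fact I would show $\varphi$ is continuous on all of $\R$. Since $J_0$ is bounded on $\R$ (indeed $|J_0(x)|\le 1$, from \eqref{eq:J_n_def_integral}) and continuous, the integrand $t\mapsto J_0(tf(u))$ is continuous in $t$ for each $u$, dominated by the constant $1$, which is integrable on $(0,1)$; dominated convergence then yields continuity of $\varphi$ everywhere, in particular at $0$, where $\varphi(0)=\int_0^1 J_0(0)\,du = 1$ as it must be for a characteristic function. With the pointwise convergence $\varphi_n\to\varphi$ established and $\varphi$ continuous at $0$, L\'evy's continuity theorem (Theorem \ref{thm:Levy}) immediately gives that $V_n[f]$ converges in distribution to a random variable $V$ whose characteristic function is $\varphi$, i.e. $\E(e^{itV}) = \int_0^1 J_0(tf(u))\,du$ for all $t\in\R$, which is exactly the claim.

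The only genuinely non-routine input is Theorem \ref{thm:main_CV}, and that has already been proved in the excerpt, so the present argument is essentially a packaging step: rewrite $\varphi_n$ as an integral, invoke Theorem \ref{thm:main_CV} with the scaled function $tf$, check continuity of the limit by dominated convergence, and quote L\'evy. I do not anticipate a serious obstacle; the one place to be slightly careful is to make sure the $L^1_{loc}(0,1)$ hypothesis on $f$ is exactly what Theorem \ref{thm:main_CV} needs when $t$ scales $f$ — but scaling by a fixed real constant $t$ preserves membership in $L^1_{loc}$, so this is harmless, and the boundedness of $J_0$ is what makes the continuity check trivial regardless of any integrability of $f$ itself.
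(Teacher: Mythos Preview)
Your proof is correct and follows essentially the same route as the paper: compute $\varphi_n(t)$ as an integral, apply Theorem \ref{thm:main_CV} with the scaled function $tf$, note that the limit $\varphi$ is continuous, and invoke L\'evy's theorem. The only difference is that you supply more detail (the dominated-convergence justification of continuity via $|J_0|\le 1$, and the remark that scaling by $t$ preserves $L^1_{loc}$), whereas the paper simply asserts continuity of $\varphi$.
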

\begin{proof}
Since $f\in L^1_{loc}((0,1))$ then by application of Theorem \ref{thm:main_CV} we get 
$$\lim_{n\to\infty}\E(e^{itf(U)\sin(n U)})=\lim_{n\to\infty}\int_{0}^{1}e^{itf(u)\sin(nu)}du=\int_{0}^{1}J_0(tf(u))du$$ for all $t\in\R$. 
The function $\varphi$ defined \begin{equation}\label{eq:def_Phi}
	\varphi(t):=\int_{0}^{1}J_0(tf(u))du, \quad t\in \R
\end{equation}
 is continuous, by consequence appealing to Levy Theorem \ref{thm:Levy} there exists a random variable $V$ such that $V_n[f]$ converges in law to $V$ and $$\E(e^{itV})=\int_{0}^{1}J_0(tf(u))du, \quad t\in \R,$$
 and the proof is complete.	
\end{proof}

The next result gives sufficient conditions on the function $f$ so that the random variable $V$  in Theorem \ref{thm:main_result} admits a  probability density function. 
\begin{cor}\label{cor:sufficient_cond}
	We reuse the notations of Theorem \ref{thm:main_result} and assume that    
	 $f$ is a strictly monotone bijection of class $C^1$  from $(0,1)$ into some $(a,b)\subset (0,+\infty)$ such that $f\in L^1_{loc}(0,1)$. 	 Let 
	 $$h(u):=\epsilon_f\chi_{(a,b)}(u)\frac{(f^{-1})^{\prime}(u)}{u},\quad u\in(0,+\infty)$$ with $\chi_{(a,b)}$ being the characteristic  function of the set $(a,b)$, and     $\epsilon_f=1$ if $f$ is increasing and $\epsilon_f=-1$ if $f$ is  decreasing.

	  Assume that 	  	
	  	the map $v:=\cH_0(h)$  satisfies 	 
	  	$v\in L^1(\R)$.     
	Then the sequence of random variables $V_n[f]$ defined by \eqref{eq:def_V_n} converges in law to a random variable $V$ having  $\frac{1}{\sqrt{2\pi}}\cF_1(v)$ as a probability density  function. 
\end{cor}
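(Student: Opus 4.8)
The plan is to start from the characteristic function formula $\varphi(t)=\int_0^1 J_0(tf(u))\,du$ furnished by Theorem~\ref{thm:main_result} and rewrite the integral as a Hankel transform, so that $\varphi$ becomes (up to normalization) a one–dimensional Fourier transform of an $L^1$ function; Lévy's inversion then identifies the density. First I would perform the change of variable $u = f^{-1}(r)$ in $\int_0^1 J_0(tf(u))\,du$. Since $f$ is a strictly monotone $C^1$ bijection from $(0,1)$ onto $(a,b)\subset(0,+\infty)$, writing $r=f(u)$ gives $du = (f^{-1})'(r)\,dr$, and the orientation factor $\epsilon_f$ accounts for the sign when $f$ is decreasing; hence
\begin{equation*}
\varphi(t)=\int_a^b J_0(t\,r)\,\epsilon_f (f^{-1})'(r)\,dr=\int_0^{\infty} J_0(t r)\, r\, h(r)\,dr=\mathcal{H}_0(h)(t)=v(t),
\end{equation*}
using the definition of $h$ (the factor $1/r$ in $h$ cancels the $r\,dr$ weight in the Hankel transform) and the definition \eqref{eq:def_Hankel_Trans} of $\mathcal H_0$. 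Here one must check that $r\mapsto r\,h(r)=\epsilon_f\chi_{(a,b)}(r)(f^{-1})'(r)$ is in $L^1((0,\infty))$, which holds because its integral is just $\pm\int_a^b (f^{-1})'(r)\,dr=\pm 1$ by the fundamental theorem of calculus (or directly: $\int_0^1 |J_0(tf(u))|\,du\le 1<\infty$ since $|J_0|\le 1$), so $\mathcal H_0(h)$ is well defined and in fact $\varphi=v$ everywhere on $\R$.

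Next I would invoke the hypothesis $v=\mathcal H_0(h)\in L^1(\R)$. Since $\varphi=v\in L^1(\R)$ is the characteristic function of $V$, Lévy's inversion theorem applies and $V$ has a bounded continuous density given by
\begin{equation*}
p(x)=\frac{1}{2\pi}\int_{\R} e^{-itx}\varphi(t)\,dt=\frac{1}{2\pi}\int_{\R} e^{-itx}v(t)\,dt .
\end{equation*}
Recognizing the right-hand side in terms of the normalization used in the excerpt, $\cF_1(v)(x)=\frac{1}{\sqrt{2\pi}}\int_{\R}v(t)e^{-itx}\,dt$, and using that $\varphi$ is even (because $J_0$ is even and $t\mapsto tf(u)$ is odd in $t$, so $v(-t)=v(t)$), we get $p(x)=\frac{1}{\sqrt{2\pi}}\,\cF_1(v)(x)$, which is exactly the claimed density. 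The evenness of $v$ also makes $p$ real-valued, as it must be. Strictly speaking, to apply Lévy inversion in this clean form it suffices that $\varphi\in L^1$, which is the standing assumption; alternatively one notes that $V_n[f]\Rightarrow V$ from Theorem~\ref{thm:main_result}, and an $L^1$ characteristic function forces absolute continuity of the limit law with the stated density.

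The main obstacle—really the only substantive point—is justifying the identification $\varphi=\mathcal H_0(h)$ rigorously, i.e. that the change of variables is legitimate and that $h$ satisfies the integrability needed for $\mathcal H_0(h)$ to be defined pointwise by the absolutely convergent integral \eqref{eq:def_Hankel_Trans}. This is where the monotone-$C^1$-bijection hypothesis on $f$ and the boundedness of $J_0$ do all the work; once $\varphi=v$ with $v\in L^1(\R)$ is in hand, the rest is a direct appeal to Fourier inversion for characteristic functions. A minor bookkeeping care is the placement of the $2\pi$ factors, reconciling the probabilist's inversion constant $1/2\pi$ with the symmetric convention $1/\sqrt{2\pi}$ used in the definition of $\cF_1$ in this paper, which accounts for the exact constant $\frac{1}{\sqrt{2\pi}}$ in the statement.
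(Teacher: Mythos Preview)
Your proposal is correct and follows essentially the same route as the paper's proof: apply Theorem~\ref{thm:main_result}, change variables $u\mapsto r=f(u)$ to rewrite $\varphi(t)=\int_0^1 J_0(tf(u))\,du$ as $\mathcal{H}_0(h)(t)=v(t)$, and then invoke $v\in L^1(\R)$ to deduce via Fourier inversion that $V$ has density $\frac{1}{\sqrt{2\pi}}\cF_1(v)$. Your write-up is in fact more thorough than the paper's (which is quite terse), supplying the integrability check for $r\,h(r)$, the evenness of $v$, and the bookkeeping of the $2\pi$ constants.
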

\begin{proof}
	From Theorem \ref{thm:main_result} we have   
	$$\E(e^{itV})=\epsilon_f\int_{a}^{b}J_0(tu) (f^{-1})^{\prime}(u)du=\int_{0}^{\infty}J_0(tu) h(u)udu=v(t)$$ for all $t\in\R$. 	
	By hypothesis  $v\in L^1(\R)$, thus   the probability density function of $V$ is $\frac{1}{\sqrt{2\pi}}\cF_1(v)$ and this ends the proof.
\end{proof}

\begin{rem}\label{rem:using_F_2}
	Keeping Corollary \ref{cor:sufficient_cond} notations in mind, it is worth noting that if we assume     
$h(\nrm{\cdot})\in L^1(\R^2)$, then by Theorem \ref{thm:radial_FT} we get   $v(t)=\cF_2(h(\nrm{\cdot}))(\abs{t}e_1)$ for all $t\in\R$ where $e_1=(1,0)$. 	 
\end{rem}

Now we come to our third main result in this paper which we will refer to by the inverse problem. Our  objective here is to give sufficient conditions so that the inverse problem admits a solution. More precisely, given a suitable map $\psi\in L^1(\R)$, we give
 sufficient conditions  ensuring that there exists a function $f$ such that the sequence $V_n[f]$ given by \eqref{eq:def_V_n} converges in law to a random variable $V$ having $\frac{1}{\sqrt{2\pi}}\cF_1(\psi)$ as probability density function. 
Solving this problem is particularly useful when one wants to sample some data from a given none standard distribution.  See  Section \ref{sec:applications} for a an example of the Cauchy distribution using this inverse problem.

Before stating our result,  it is worth noting that following Theorem \ref{thm:main_result}, there are some conditions that have to be fulfilled by the  function $\psi$. This leads us to investigate our inverse problem within the class of functions $\psi:\R\to\R$ satisfying the following, not restrictive, properties denoted hereafter by \eqref{eq:def_L}.
\begin{equation}\tag{$\mathcal{L}$}\label{eq:def_L}
 \left\{
	\begin{array}{ll}
		1. & \psi\in C^1(\R),\\[3pt] 
			2.&\psi, \sqrt{t}\psi(t)  \text{ and }  t\psi(t) \text{ are in }  L^1((0,\infty)),\\[3pt] 
			3.& \psi  \text{ even,  non negative with }  \psi(0)=1.
	\end{array}\right.
\end{equation}

For a function  $\psi$   satisfying \eqref{eq:def_L} we set
\begin{equation}\label{eq:def_k_psi}
	k_{\psi}(t):=1-\int_{0}^{t}u\cH_0(\psi)(u)du,\quad t\geq0.
\end{equation}
Appealing to \eqref{eq:def_L} and \eqref{eq:Hankel_inverse} we see that $k_\psi$ is a decreasing bijection from   $(0,+\infty)$ into $(0,1)$. 
Moreover, we have the following:

\begin{thm}\label{thm:solving_PDF}
	Let $\psi$ be a function satisfying \eqref{eq:def_L} such that $(k_\psi)^{-1}\in L^1_{loc}(0,1)$. 
 Then the sequence  of random  variables $V_n[(k_{\psi})^{-1}]$ given by \eqref{eq:def_V_n}   converges  in law to a random  variable $V$ having $\frac{1}{\sqrt{2\pi}}\cF_1(\psi)$ as probability density function.
\end{thm}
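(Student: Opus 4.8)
The plan is to apply Corollary \ref{cor:sufficient_cond} with $f=(k_\psi)^{-1}$ and to identify the resulting density explicitly as $\tfrac{1}{\sqrt{2\pi}}\cF_1(\psi)$. First I would record the structural facts about $k_\psi$. By \eqref{eq:def_L} and the Hankel inversion formula \eqref{eq:Hankel_inverse} applied to $\psi$, the function $u\mapsto u\,\cH_0(\psi)(u)$ is integrable on $(0,+\infty)$ with $\int_0^\infty u\,\cH_0(\psi)(u)\,du=\psi(0)=1$ (this is exactly \eqref{eq:Hankel_inverse} evaluated at $r=0$, using $J_0(0)=1$). Hence $k_\psi$ is $C^1$ on $(0,+\infty)$, with $k_\psi'(t)=-t\,\cH_0(\psi)(t)$, it is strictly decreasing (one must check $\cH_0(\psi)>0$, or at least that $k_\psi$ is strictly monotone; this is where a positivity or injectivity argument is needed — see below), $k_\psi(0)=1$ and $k_\psi(+\infty)=0$. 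Therefore $k_\psi:(0,+\infty)\to(0,1)$ is a decreasing bijection of class $C^1$, and its inverse $f=(k_\psi)^{-1}:(0,1)\to(0,+\infty)$ is a decreasing $C^1$ bijection onto $(a,b)=(0,+\infty)$; by hypothesis $f\in L^1_{loc}(0,1)$, so all the standing assumptions of Corollary \ref{cor:sufficient_cond} are in force with $\epsilon_f=-1$.

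Next I would compute the auxiliary function $h$ of Corollary \ref{cor:sufficient_cond}. Since $(f^{-1})=k_\psi$, we have $(f^{-1})'(u)=k_\psi'(u)=-u\,\cH_0(\psi)(u)$ for $u>0$, and with $\epsilon_f=-1$ and $(a,b)=(0,+\infty)$,
$$
h(u)=-\,\frac{(f^{-1})'(u)}{u}=\cH_0(\psi)(u),\qquad u>0 .
$$
Thus $h=\cH_0(\psi)$. Now I invoke the self-reciprocity of the Hankel transform: by the inversion statement \eqref{eq:Hankel_inverse} — whose hypotheses ($\psi$ of bounded variation on bounded subintervals, $\sqrt{t}\psi(t)\in L^1$, $t\psi(t)\in L^1$) are guaranteed by \eqref{eq:def_L} together with $\psi\in C^1$ — we get $\cH_0(h)=\cH_0(\cH_0(\psi))=\psi$. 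Hence $v:=\cH_0(h)=\psi$, and $v=\psi\in L^1(\R)$ by item 2 of \eqref{eq:def_L} (integrability on $(0,\infty)$ plus evenness). This verifies the remaining hypothesis of Corollary \ref{cor:sufficient_cond}, and that corollary yields directly that $V_n[f]=V_n[(k_\psi)^{-1}]$ converges in law to a random variable $V$ whose probability density is $\tfrac{1}{\sqrt{2\pi}}\cF_1(v)=\tfrac{1}{\sqrt{2\pi}}\cF_1(\psi)$, which is the claim.

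The main obstacle I anticipate is the claim that $k_\psi$ is a genuine \emph{bijection} onto $(0,1)$, i.e.\ strictly monotone with the correct limits. The limit $k_\psi(0^+)=1$ is immediate from the definition, and $k_\psi(+\infty)=1-\int_0^\infty u\,\cH_0(\psi)(u)\,du=0$ follows from the inversion formula at $r=0$ as noted above; the subtle point is strict monotonicity, which requires $t\,\cH_0(\psi)(t)$ not to vanish on any interval (true as soon as $\cH_0(\psi)$ does not vanish on a set of positive measure inside $(0,\infty)$ — in particular it holds if $\cH_0(\psi)\ge 0$, e.g.\ when $\psi$ is itself a Hankel transform of a nonnegative function, which is automatically the case here since $\psi=\cH_0(h)$ and one would want $h\ge0$). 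A secondary technical point is checking that $\psi$ has bounded variation on bounded subintervals so that \eqref{eq:Hankel_inverse} applies to it; this is harmless because $\psi\in C^1(\R)$. Everything else is a direct substitution into Corollary \ref{cor:sufficient_cond}, so once the bijectivity of $k_\psi$ is secured the proof is essentially a one-line application of the corollary.
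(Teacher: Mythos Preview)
Your proof is correct and follows essentially the same route as the paper: both identify the limiting characteristic function as $\cH_0(\cH_0(\psi))=\psi$ via the change of variable $u\mapsto k_\psi(u)$ and Hankel inversion, then read off the density as $\tfrac{1}{\sqrt{2\pi}}\cF_1(\psi)$. The only cosmetic difference is that you package the change-of-variable step by invoking Corollary~\ref{cor:sufficient_cond}, whereas the paper applies Theorem~\ref{thm:main_result} directly and performs that substitution by hand; your concern about strict monotonicity of $k_\psi$ is legitimate, but the paper simply asserts this bijectivity immediately before the theorem statement and does not justify it further either.
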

\begin{proof} Define the sequence  of random  variables $V_n[f]$ as in \eqref{eq:def_V_n} with $f(t)=(k_{\psi})^{-1}(t)$, $t\in (0,1)$. 	
	From \ref{thm:main_result},  we get that the sequence $V_n[f]$
	converges  in law to a random  variable $V$ with $$\E(e^{itV})=-\int_{0}^{\infty}J_0(tu) (f^{-1})^{\prime}(u)du=-\int_{0}^{\infty}J_0(tu) k_\psi^{\prime}(u)du,$$ for all $t\in\R$. 		
	Using the expression \eqref{eq:def_k_psi} leads to 
	$$\E(e^{itV})=\cH_0(\cH_0(\psi))(t),$$ for all $t\in\R$. Since $\psi$ satisfies \eqref{eq:def_L} the  Hankel transform inversion formula \eqref{eq:Hankel_inverse}  gives us  $$\cH_0(\cH{_0(\psi)})(t)=\psi(t),\quad  \forall t\geq 0.$$
	Taking into account that $\psi$ is even, the latter equality holds for all $t\in\R$, that is 
	$$\E(e^{itV})=\psi(t),\quad  \forall t\in\R.$$ Thus   $\frac{1}{\sqrt{2\pi}}\cF_1(\psi)$ is the probability density function for $V$ and the proof is complete.	
\end{proof}

\section{Applications} 
\label{sec:applications}

This section is devoted to some applications using the results and notations of Section \ref{sec:main_result}. Namely, we will use two kind of applications. 
 The first one, which we will call ``Direct problem", consists of starting from a given function $f$ and use the results of Theorem \ref{thm:main_result} or Corollary \ref{cor:sufficient_cond}. The second application, which we will call ``Inverse problem", will starts from a given map $\psi$  and use the results of Theorem \ref{thm:solving_PDF} to construct the function $f$ in \eqref{eq:def_V_n}.
 
\begin{example}\label{exmpl:direct_Gaussian} In this example we use the notations and results of Corollary \ref{cor:sufficient_cond} and
  		consider $f$ given by $$f(x)=\sqrt{-2\ln(x)},\quad x\in(0,1).$$
  		We see that $f\in L^1((0,1))$, and  is a decreasing bijection of class $C^1$ from $(0,1)$ to $(0,+\infty)$. Moreover, by setting  $$h(u):=-\frac{(f^{-1})^{\prime}(u)}{u}=e^{-u^2/2},\quad u>0$$ direct application of Corollary \ref{cor:sufficient_cond}  leads to 
  		\begin{align*}
  			\psi(t)=e^{-t^2/2}, \quad t\in\R
  		\end{align*}
  		 and t that the sequence $V_n[f]$ defined by 
$$V_n=\sqrt{-2\ln(U)}\sin(n U),\quad n\in\N$$ converges in law to the normal Gaussian. Thus, we  recover the result in \cite{addaim2018enhanced}.   		
\end{example}

\begin{example}\label{exmpl:inverse_Gaussian} 
	
In this example we illustrate the technique of  inverse problem mentioned above for two given distributions.
\begin{enumerate}
	\item 
	We consider the Gaussian distribution density  $\psi(t)=e^{-t^2/2}$, $t\in\R$. 	
	Following Theorem \ref{thm:solving_PDF}, the candidate function $f:=(k_{\psi})^{-1}$ is solution of 
	$$k_{\psi}^{\prime}(t)=-t\cH_0(\psi)(t),\quad t>0.$$
	
	According to  
	\cite[Table 9.1]{poularikas2010transforms} we have
	$$k_{\psi}^{\prime}(t)=-te^{-t^2/2},\quad t>0$$ from which it follows that
	$$f(u)=(k_{\psi})^{-1}(u)=\sqrt{-2\ln(u)}, \quad u\in (0,1).$$
	Thus, we recover again   the result in \cite{addaim2018enhanced} using the inverse problem. 
	\item 	
	Here the inverse method problem us to construct a sequence $V_n[f]$ as  in \eqref{eq:def_V_n} that converges in law to a Cauchy distribution. This distribution is well known for its key role in several scientific areas.  See  \cite{feller1991introduction,cardona2007light,tyler2008robust,dimitris2000statistical}  for details and applications of  this distribution.
	
	\medskip

	Let $\theta(t):=\frac{1}{t^2+\pi/2}$ and  $\psi=\cF_1(\theta)$, $t\in\R$. 	
	 From  Lemma \ref{lem:Fourier_transf} 
	 and using  
 \cite[Table 9.1]{poularikas2010transforms} we get
	$$
		\cH_0(\psi)(t)=\frac{\sqrt{\pi/2}}{(t^2+\pi/2)^{3/2}},\quad \forall t\in\R.
$$
	Following the notations in Theorem \ref{thm:solving_PDF}, we get
	$$k_{\psi}(t)=1-\sqrt{\pi/2}\int_{0}^{t}\frac{u}{(u^2+\pi/2)^{3/2}}du=\frac{\sqrt{\pi}}{\sqrt{2t^2+\pi}}$$ 
 	 and then 
	\begin{equation}\label{eq:def_f_Cauchy}
		f(u):=(k_{\psi})^{-1}(u)=\sqrt{\frac{\pi}{2}}\frac{\sqrt{1-u^2}}{u},\quad u\in (0,1).
	\end{equation}
	Using Theorem \ref{thm:solving_PDF}, we deduce that the sequence $V_n[f]$ in \eqref{eq:def_V_n} with $f$ given by \eqref{eq:def_f_Cauchy} converges in law to a random variable having $\frac{1}{\sqrt{2\pi}}\cF_1(\psi)=\frac{1}{\sqrt{2\pi}}\theta$ as probability density function, which is nothing but $\mr{Cauchy}(0,\sqrt{\pi/2})$.
	 Figure \ref{fig:cauchy_distrib} shows a sampling of the random variable $V_n$ with  $n=1000$ and 10000 samples of the uniform distribution in (0,1).
\end{enumerate}

\begin{figure}[h!]
	\centering
	\includegraphics[scale=.4]{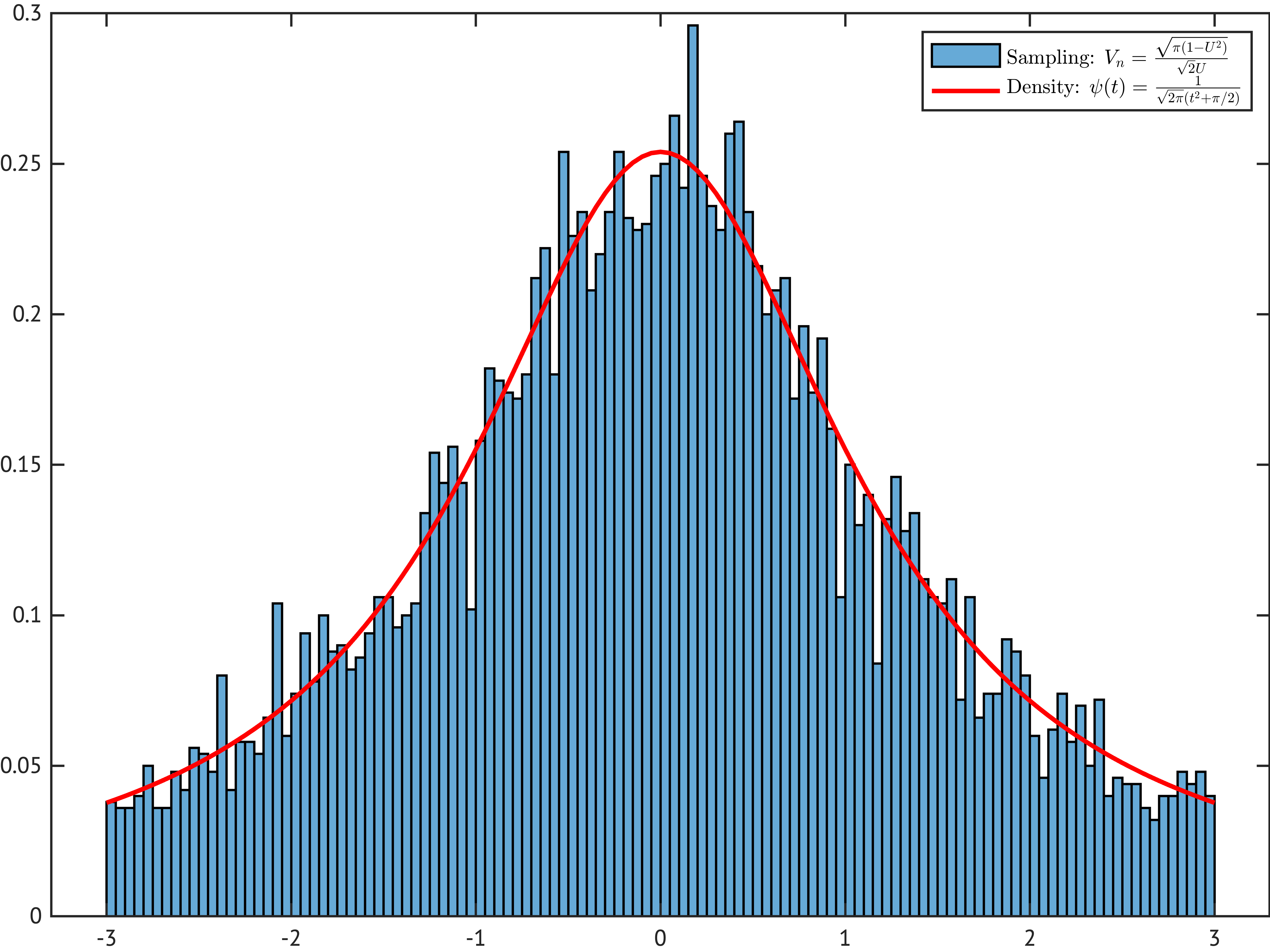}
	\caption{Cauchy distribution}
	\label{fig:cauchy_distrib}
\end{figure}

\end{example}

\section{Conclusion}\label{sec:conclusion}	
The results in Theorem \ref{thm:main_result}, Corollary \ref{cor:sufficient_cond}  allow one to sample and simulate data when  the parameter  function $f$ is known.  More interestingly, the inverse problem solved by \ref{thm:solving_PDF} may be initiated to construct the ``unknown" function $f$ in \eqref{eq:def_V_n} which allows one to sample from a given particular distribution. 

Some remarks to conclude our work are as follows:
Among future perspectives for our work we may cite:
\begin{enumerate}
	\item It is not hard to extend the results in  Theorem \ref{thm:main_result} and Theorem \ref{thm:solving_PDF} to  random variables with several variables of the form $$V_n[f]=(f_1(U_1)\sin(n U_1),\dots,f_d(U_d)\sin(n U_d))$$ where $f=(f_1,\dots,f_d)$, $U_j$  uniformly distributed in $(0,1)$ and $f_j\in L^1(0,1)$.  
	\item The condition \eqref{eq:def_L}.1) may be  relaxed as ``$\psi$ of bounded variation in each  $[a,b]\subset (0,\infty)$". See \cite{sneddon1995fourier}.
	\item A large class of functions satisfying \eqref{eq:def_L} is the set of even functions $\psi$ such that $\psi\in L^1(0,\infty)$ and  $\psi(t)=\sqrt{t}\varphi(t)$ with $\varphi$ is in the Schwartz class  $\mathcal{S}(0,\infty)$. 
\end{enumerate}

In a future work the following problems will be investigated.
\begin{enumerate}
	\item Study the stability of the limiting law for the direct problem, respectively the stability of the parameter function $f$ for the inverse problem.  That is:	
	\begin{itemize}
		\item If $f$ is replaced by a perturbed version  $f_{\epsilon}$, how the limiting law is perturbed?  
		\item Respectively, if the density $\psi$ is replaced by a perturbed version  $\psi_{\epsilon}$, how the parameter function $f$ is perturbed?  
	\end{itemize}

	\item Looking for characterization results in  Theorem \ref{thm:main_result} and Theorem \ref{thm:solving_PDF}.

\end{enumerate}

\bibliographystyle{abbrv}
\bibliography{rv2pdf.bib}
\end{document}